\newtheorem{thm}{Theorem}[section]
\newtheorem{ex}{Example}[section]
\newtheorem{lem}{Lemma}[section]
\newtheorem{mthd}{Method}[section]
\theoremstyle{definition}
\newtheorem{defn}{Definition}[section]
\theoremstyle{remark}
\begin{document}
	
	\title{More on Projected Type  Iteration  Method and Linear Complementarity Problem}
	
	\author{Bharat Kumar$^{a,1}$, Deepmala$^{a,2}$ and A.K. Das $^{b,3}$\\
		\emph{\small $^{a}$Mathematics Discipline,}\\
		\emph{\small PDPM-Indian Institute of Information Technology, Design and Manufacturing,}\\
		\emph{\small Jabalpur - 482005 (MP), India}\\
		\emph{\small $^{b}$Indian Statistical Institute, 203 B.T. Road, }\\
		\emph{\small Kolkata - 700108, India}\\
		\emph{\small $^1$Email:bharatnishad.kanpu@gmail.com , $^2$Email: dmrai23@gmail.com}\\
		\emph{\small $^3$Email: akdas@isical.ac.in}}
	\date{}
	\maketitle
	
	\abstract{\noindent 	In this article, we establish a class of new projected type iteration methods based on matrix spitting for solving the linear complementarity problem. Also, we provide a sufficient condition for the convergence analysis when the system matrix is an $H_+$-matrix. We show the efficiency of the proposed method by using two numerical examples for different parameters.}

	\noindent \textbf{Keywords.} Iterative method, Linear complementarity problem, $H_{+}$-matrix, $P$-matrix,  Matrix splitting, Convergence.\\
	
	\noindent \textbf{Mathematics Subject Classification.} 90C33, 65F10, 65F50.\\
	
	
	\maketitle
	
\section{Introduction}\label{sec1}
The LCP frequently appears in an extensive range of applications that include scientific computing and engineering, such as the free boundary problem and the Nash equilibrium point of the bimatrix game; the American option pricing problem; mathematical economics; operations research; control theory; optimization theory; stochastic optimal control; economics; and elasticity theory.  For details see  \cite{Cottle1992}, \cite{Neogy2008}, \cite{Murty1988} \cite{jana2021more}, \cite{dutta2022column}, \cite{neogy2011singular}, \cite{das2018some}, \cite{jana2018processability}, \cite{das2023more} and \cite{kumar2022error}.\\
Assuming $\mathcal{A}\in \mathcal{R}^{n\times n}$ and  a vector  $\,\sigma\,\in\,\mathcal{R}^{n}.$ The  linear complementarity problem denoted as LCP$(\sigma,\mathcal{A})$ is to find the solution  $\lambda \in \mathcal{R}^{n}$ to the following system 
\begin{eqnarray}\label{eq1}
	\lambda\geq 0, ~~~~    \mathcal{A}\lambda +\sigma \geq 0,~~~~ \lambda^T(\mathcal{A}\lambda +\sigma)=0	
\end{eqnarray}
The methods for solving linear complementarity problems are divided into two categories: the pivoting method \cite{Das2014} \cite{Das2016}, \cite{Jana2019} and iterative method \cite{Najafi2013i}, \cite{Hadjidimos2018}, \cite{Kumar2022}, \cite{Kumar2023} and \cite{Kumar2023i}. Lemke and Howson \cite{lmke} introduced the complementary pivot method, but some matrices are not processable by this method as well as by Lemke’s Method. 
\noindent The linear complementarity problem can be solved in a number of ways by an iterative process; namely, the projected type methods \cite{Bai1997}, \cite{Hadjidimos2018},  \cite{Najafi2013i}, the modulus method  \cite{Bai2010}, \cite{Dong2009} and the modulus based matrix splitting iterative methods  \cite{Liu2016} and \cite{Zheng2017}.\\
Fang proposed a general fixed point method (GFP) \cite{Xi2021} assuming the case where $\Omega = \omega A_{\mathcal{D}}^{-1}$ with $\omega \textgreater 0$ and $A_{\mathcal{D}}$ is the diagonal matrix of $\mathcal{A}$. The GFP approach takes less iterations than the modulus-based successive over-relaxation (MSOR) \cite {Bai2010} iteration method. However, the GFP approach calculates the numerical solution component by component of vectors, which takes a long time.\\ In this article, we present a class of new projected type iteration methods by using the ideas of Xi \cite{Xi2021} and Ali \cite{rashid2022}. Also, we show that the fixed point equation and the linear complementarity problem are equivalent, discuss convergence conditions and provide a convergence domain for our proposed method.\\
The article is organized as follows: some required definitions, notations and well-known lemmas are given in Section 2, which will be used for the discussions in the remaining sections of this work. New projected type iteration methods are constructed in Section 3 with the help of the new equivalent fixed point form of the LCP$(\sigma, \mathcal{A})$. In Section 4, we establish the convergence domain of our proposed method. A numerical comparison between the proposed methods and modulus-based matrix splitting iteration methods, introduced by Bai \cite{Bai2010}, is illustrated in Section 5. Section 6 contains the conclusion of the article.
\section{Preliminaries}\label{Preli}
In this section, we provide an overview of various essential notations, definitions, and foundational results.\\
Suppose  $ \mathcal{A}=( {a}_{ij}) \in \mathcal{R}^{n\times n}$  and $ \mathcal{B}=( {b}_{ij}) \in \mathcal{R}^{n\times n}$ are real square matrices.  For $\mathcal{A}=({a}_{ij}) \in \mathcal{R}^{n\times n}$ and $\mathcal{B}=( {b}_{ij}) \in \mathcal{R}^{n\times n}$, $\mathcal{A} \geq $ $(\textgreater)$ $ \mathcal{B}$ means ${a}_{ij}\geq (\textgreater)$ $ {b}_{ij}$ for all $ i,j$.

\begin{defn}\cite{Xi2021}	Let  $ \mathcal{A}=({a}_{ij})\in \mathcal{R}^{n\times n}$. Then $| \mathcal{A}|=({c}_{ij})$  is defined by $ {c}_{ij}  = | {a}_{ij}|$ $\forall ~i,j$ and $| \mathcal{A}| $ represent that $ {a}_{ij} \geq 0$ $\forall ~i,j $.
\end{defn} 

\begin{defn}\label{defn0}\cite{Xi2021}
	Let $\mathcal{A}, \mathcal{B} \in \mathcal{R}^{n \times n}$. Then $| \mathcal{A}+\mathcal{B}| \leq | \mathcal{A}| +| \mathcal{B}| $  and $| \mathcal{A}\mathcal{B}| \leq | \mathcal{A}| | \mathcal{B}| $. Moreover   ${x}, {y} \in \mathcal{R}^{n}$ then $| {x}+{y}| \leq | {x}| +| {y}| $ and  $| | {x}| -| {y}| | \leq | {x}-{y}| $.
\end{defn}

\begin{defn}\cite{Das2016}
	Let  $\mathcal{A}\in \mathcal{R}^{n\times n}$.  $\mathcal{A}$ is said to be a $P$-matrix if all its principle minors are positive i.e.  det$({\mathcal{A}}_{\gamma \gamma}) ~\textgreater~ 0$ for all $\gamma \subseteq \{1,2,\ldots, n\}$.
\end{defn}
\begin{defn}\cite{Xi2021}
	Suppose  $ \mathcal{A}\in  \mathcal{R}^{n\times n}$. Then its  comparison matrix  is defined as $\langle {a}_{ij}\rangle=|{a}_{ij}| $ if $i=j$ and $\langle {a}_{ij}\rangle=-| {a}_{ij}|  $ if $i \neq j$.
\end{defn}

\begin{defn}\label{def1}\cite{Frommer1992}
	Suppose  $ \mathcal{A} \in  \mathcal{R}^{n\times n}$. $\mathcal{A}$ is said to be a $Z$-matrix if all of its non-diagonal elements are less  than or equal to zero;  $\mathcal{A}$ is  said to be an  $M$-matrix if $\mathcal{A}^{-1}\geq 0$ as well as   $Z$-matrix;
	$ \mathcal{A}$  is said to be  an $H$-matrix if   $\langle \mathcal{A} \rangle$ is an $M$-matrix; $\mathcal{A}$ is an  $H_+$-matrix if it is an $H$-matrix with  $ {a}_{ii} ~\textgreater~ 0 ~\forall ~i \in \{1,2,\ldots,n\}$.	
\end{defn} 
\begin{defn}\label{def2}$\cite{Frommer1992}$
	Suppose  $\mathcal{A} \in \mathcal{R}^{n\times n}$.	The splitting $\mathcal{A} = \mathcal{M}-\mathcal{N} $ is called an $M$-splitting if $\mathcal{M}$ is a nonsingular $M$-matrix and $\mathcal{N} \geq 0$; an  $H$-splitting if $\langle \mathcal{M} \rangle -| \mathcal{N}| $ is an $M$-matrix; an $H$-compatible splitting if  $\langle \mathcal{A} \rangle =  \langle \mathcal{M} \rangle - | \mathcal{N}| $.
\end{defn} 
\begin{lem}\cite{rashid2022}\label{lem0}
	Let ${x}, y \in \mathcal{R}^{n} $.  ${x}\geq 0$, ${y}\geq 0$, $x^{T}{y}=0$ if and only if  ${x}+{y}=| {x}-{y}| $.
\end{lem}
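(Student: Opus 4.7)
The plan is to reduce the equation $x+y=|x-y|$ to $n$ scalar equations $x_i+y_i=|x_i-y_i|$ and handle each coordinate independently, since the characterization is coordinatewise in nature.

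For the forward direction, I would assume $x,y\geq 0$ and $x^Ty=0$. Because $x_iy_i\geq 0$ for every $i$, the condition $\sum_i x_iy_i=0$ forces $x_iy_i=0$ coordinatewise, so at each index at least one of $x_i, y_i$ vanishes. In either subcase the remaining entry is non-negative, and a direct check shows $|x_i-y_i|=x_i+y_i$. Stacking these scalar identities gives the vector equation $|x-y|=x+y$.

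For the reverse direction, I would start from the scalar equation $x_i+y_i=|x_i-y_i|$ and split on the sign of $x_i-y_i$. If $x_i\geq y_i$ then $|x_i-y_i|=x_i-y_i$, which forces $y_i=0$ and consequently $x_i=|x_i|\geq 0$. If instead $x_i<y_i$ then symmetrically $x_i=0$ and $y_i\geq 0$. Either way $x_i,y_i\geq 0$ and $x_iy_i=0$; summing over $i$ yields $x^Ty=0$.

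There is no real obstacle here: the lemma is essentially a coordinatewise tautology, with the only mildly delicate point being the careful case split on $\mathrm{sign}(x_i-y_i)$ in the reverse direction to conclude non-negativity from the absolute-value identity. I would make sure to treat the equality case $x_i=y_i$ (which falls under the first subcase and forces $x_i=y_i=0$) explicitly to avoid any ambiguity.
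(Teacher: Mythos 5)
Your argument is correct and complete: the coordinatewise reduction, the observation that $x_iy_i\geq 0$ plus a zero sum forces $x_iy_i=0$ for each $i$, and the sign split on $x_i-y_i$ in the converse all go through, including the boundary case $x_i=y_i$. The paper itself offers no proof of this lemma (it is simply quoted from the cited reference), so there is nothing to compare against; yours is the standard elementary argument one would expect.
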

\begin{lem}\label{lem1}\cite{Frommer1992}
	Suppose  $\mathcal{A}, B_{1} \in \mathcal{R}^{n\times n}$. If $\mathcal{A}$ and $\mathcal{B}$ are $M$ and $Z$-matrices respectively with $\mathcal{A} \leq \mathcal{B}$ then $\mathcal{B}$ is an $M$-matrix. 
	If $\mathcal{A}$ is an $H$-matrix then $| \mathcal{A}^{-1}| \leq \langle \mathcal{A}\rangle^{-1}$.
	If $\mathcal{A} \leq \mathcal{B}$, then $\rho(\mathcal{A}) \leq \rho(\mathcal{B})$.
\end{lem}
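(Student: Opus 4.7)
The plan is to treat Lemma~\ref{lem1} as three independent classical facts and dispatch them in the order stated, leaning on the $M$- and $H$-matrix characterizations in Definitions~\ref{def1}--\ref{def2}.

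For the first assertion I would use the standard equivalent characterization that a $Z$-matrix is an $M$-matrix if and only if there exists a strictly positive vector $x>0$ with $\mathcal{A}x>0$. Because $\mathcal{A}$ is already an $M$-matrix, such an $x$ exists, and then $\mathcal{A}\leq \mathcal{B}$ combined with $x>0$ yields $\mathcal{B}x\geq \mathcal{A}x > 0$ componentwise; since $\mathcal{B}$ is assumed to be a $Z$-matrix, the same characterization immediately upgrades it to an $M$-matrix. This step is essentially a one-line monotonicity argument and I do not expect difficulty here.

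For the second assertion, the Ostrowski--type comparison bound $|\mathcal{A}^{-1}|\leq \langle \mathcal{A}\rangle^{-1}$, I would split $\mathcal{A}=D-B$ into its diagonal and (negated) off-diagonal parts, so that $\langle \mathcal{A}\rangle = |D|-|B|$. Because $\mathcal{A}$ is an $H$-matrix, $\langle \mathcal{A}\rangle$ is an $M$-matrix; in particular $|D|$ is invertible and, writing $\langle \mathcal{A}\rangle = |D|(I-|D|^{-1}|B|)$, one obtains $\rho(|D|^{-1}|B|)<1$. I would then express $\mathcal{A}^{-1}=(I-D^{-1}B)^{-1}D^{-1}$, expand the middle factor via the Neumann series, and apply Definition~\ref{defn0} termwise to get $|(D^{-1}B)^{k}|\leq |D^{-1}B|^{k} = (|D|^{-1}|B|)^{k}$. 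Summing over $k$ and multiplying by $|D|^{-1}=|D^{-1}|$ on the right yields $|\mathcal{A}^{-1}|\leq (I-|D|^{-1}|B|)^{-1}|D|^{-1} = \langle \mathcal{A}\rangle^{-1}$. The main obstacle is making sure the componentwise bound survives the infinite Neumann sum, and this is exactly where the spectral condition $\rho(|D|^{-1}|B|)<1$ earns its keep.

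The third assertion, $\rho(\mathcal{A})\leq \rho(\mathcal{B})$, is the classical Perron--Frobenius monotonicity and is really meant under the implicit non-negativity assumption $0\leq \mathcal{A}\leq \mathcal{B}$. Under that hypothesis an easy induction gives $\mathcal{A}^{k}\leq \mathcal{B}^{k}$ entrywise for every $k$, so for any monotone matrix norm one has $\|\mathcal{A}^{k}\|\leq \|\mathcal{B}^{k}\|$, and Gelfand's formula $\rho(X)=\lim_{k\to\infty}\|X^{k}\|^{1/k}$ closes the argument. The only subtlety worth flagging is the tacit non-negativity hypothesis, which the paper later invokes implicitly when comparing spectral radii of iteration matrices.
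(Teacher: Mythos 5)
The paper offers no proof of Lemma~\ref{lem1}: it is quoted from \cite{Frommer1992} and used as a black box, so there is no internal argument to compare yours against. Your three arguments are sound and are the standard ones --- the positive-vector characterization of nonsingular $M$-matrices for the first claim, the Neumann-series comparison argument for the Ostrowski bound $|\mathcal{A}^{-1}|\leq\langle\mathcal{A}\rangle^{-1}$, and Perron--Frobenius monotonicity via Gelfand's formula for the third. Two remarks. First, the caveat you flag in passing is in fact a genuine defect of the statement as printed: without nonnegativity the third claim is false (take $\mathcal{A}=-2I\leq 0=\mathcal{B}$, so $\rho(\mathcal{A})=2>0=\rho(\mathcal{B})$); it holds under $0\leq\mathcal{A}\leq\mathcal{B}$, or more generally $|\mathcal{A}|\leq\mathcal{B}$, which is how the paper actually uses it in the nonnegative comparisons of Theorem~\ref{thm6}. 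Second, in the Neumann-series step you should state explicitly why $(I-D^{-1}B)^{-1}$ exists and the series converges: $D^{-1}B$ itself need not be nonnegative, so you need the bridge $\rho(D^{-1}B)\leq\rho(|D^{-1}B|)=\rho(|D|^{-1}|B|)<1$, the last inequality coming from the $M$-matrix splitting $\langle\mathcal{A}\rangle=|D|-|B|$ (Lemma~\ref{lem2}). With those two points made explicit, your proof is complete and correct.
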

\begin{lem}\label{lem2}\cite{Xi2021}
	Let $\mathcal{A}\in \mathcal{R}^{n\times n}$ be  an $M$-matrix  and $\mathcal{A}=\mathcal{M}-\mathcal{N}$ be an $M$-splitting. Let $\rho$ be the spectral radius, then ~$\rho(\mathcal{M}^{-1}\mathcal{N})$ $\textless $ $1$.
\end{lem}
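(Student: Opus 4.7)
My plan is to exploit the definition of an $M$-splitting together with the fact that an $M$-matrix has a nonnegative inverse, and to extract a contradiction from a Perron-type eigenvector assumption that $\rho := \rho(\mathcal{M}^{-1}\mathcal{N}) \geq 1$.

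First I would observe that because $\mathcal{A} = \mathcal{M} - \mathcal{N}$ is an $M$-splitting, $\mathcal{M}$ is a nonsingular $M$-matrix (so $\mathcal{M}^{-1} \geq 0$) and $\mathcal{N} \geq 0$. Hence $T := \mathcal{M}^{-1}\mathcal{N}$ is an entrywise nonnegative matrix. By the Perron--Frobenius theorem applied to $T$, its spectral radius $\rho = \rho(T)$ is an eigenvalue and there exists a nonzero vector $x \geq 0$ with $Tx = \rho x$, i.e.\ $\mathcal{N} x = \rho \mathcal{M} x$.

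Next I would rewrite this in terms of $\mathcal{A}$: subtracting gives $\mathcal{A} x = (\mathcal{M} - \mathcal{N})x = (1-\rho)\mathcal{M} x$. Since $\mathcal{A}$ is an $M$-matrix it is invertible with $\mathcal{A}^{-1} \geq 0$, so I can write $x = (1-\rho)\,\mathcal{A}^{-1}\mathcal{M} x$. The key identity is
\begin{equation*}
\mathcal{A}^{-1}\mathcal{M} = \mathcal{A}^{-1}(\mathcal{A}+\mathcal{N}) = I + \mathcal{A}^{-1}\mathcal{N} \geq I,
\end{equation*}
because $\mathcal{A}^{-1} \geq 0$ and $\mathcal{N} \geq 0$. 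Consequently $\mathcal{A}^{-1}\mathcal{M} x \geq x \geq 0$, and at any index $i$ with $x_i > 0$ we also have $(\mathcal{A}^{-1}\mathcal{M} x)_i > 0$.

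Finally I would rule out $\rho \geq 1$ by comparing signs. If $\rho \geq 1$, then $1-\rho \leq 0$, so the $i$-th component of $(1-\rho)\mathcal{A}^{-1}\mathcal{M} x$ is nonpositive, which contradicts $x_i > 0$. Hence $\rho < 1$, as claimed. The only subtle point is the componentwise comparison step: one must pick an index where the Perron eigenvector is strictly positive, which is guaranteed simply by $x \neq 0$ since we do not need the stronger statement that $x$ is fully positive. This is the sole place where care is needed; the rest is bookkeeping with the definitions of $M$-matrix and $M$-splitting.
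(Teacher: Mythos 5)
Your proof is correct. Note that the paper itself gives no proof of this lemma --- it is quoted from the cited reference [Xi2021] as a known result --- so there is nothing in the text to compare against line by line. Your argument is the standard Perron--Frobenius route for regular/$M$-splittings of a (nonsingular) $M$-matrix: since $\mathcal{M}^{-1}\geq 0$ and $\mathcal{N}\geq 0$, the iteration matrix $T=\mathcal{M}^{-1}\mathcal{N}$ is nonnegative, so $\rho(T)$ admits a nonzero nonnegative eigenvector $x$; the identity $\mathcal{A}^{-1}\mathcal{M}=I+\mathcal{A}^{-1}\mathcal{N}\geq I$ together with $x=(1-\rho)\mathcal{A}^{-1}\mathcal{M}x$ then forces $\rho<1$ by the sign comparison at an index where $x_i>0$. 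Each step uses only the weak form of Perron--Frobenius (no irreducibility needed) and the paper's definitions of $M$-matrix and $M$-splitting, so the argument is complete as written; it is essentially the classical proof one finds in Varga or Berman--Plemmons, which in fact yields the sharper identity $\rho(T)=\rho(\mathcal{A}^{-1}\mathcal{N})/(1+\rho(\mathcal{A}^{-1}\mathcal{N}))$.
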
	
\begin{lem}\label{lem3} \cite{Berman1994} 	Suppose  $\mathcal{A} \in \mathcal{R}^{n\times n}$ with splitting $\mathcal{A} = \mathcal{M}-\mathcal{N} $. Let splitting be an $H$-compatible of an $H$-matrix, then it is an $H$-splitting but converse is not true.
\end{lem}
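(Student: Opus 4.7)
The statement splits cleanly into two pieces: (i) an $H$-compatible splitting of an $H$-matrix is an $H$-splitting, and (ii) the converse can fail. I would handle (i) by a direct definition chase, and (ii) by constructing an explicit counterexample.

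For (i), my plan is simply to combine the two relevant definitions. Since $\mathcal{A}$ is an $H$-matrix, Definition \ref{def1} says $\langle\mathcal{A}\rangle$ is an $M$-matrix. Since the splitting is $H$-compatible, Definition \ref{def2} gives $\langle\mathcal{A}\rangle = \langle\mathcal{M}\rangle - |\mathcal{N}|$. Substituting, $\langle\mathcal{M}\rangle - |\mathcal{N}|$ is an $M$-matrix, which is the defining property of an $H$-splitting. So this direction is essentially two lines.

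For (ii), the strategy is to exhibit a small explicit example in which the splitting is an $H$-splitting but not $H$-compatible. The underlying reason such examples exist is the elementary componentwise inequality $\langle\mathcal{A}\rangle \geq \langle\mathcal{M}\rangle - |\mathcal{N}|$, which follows from $a_{ij} = m_{ij} - n_{ij}$ together with the triangle inequality, and which becomes an equality exactly when the splitting is $H$-compatible. Whenever the signs of $m_{ij}$ and $-n_{ij}$ partially cancel, that inequality is strict at some entry, and we obtain an $H$-splitting that is not $H$-compatible, provided the resulting object $\langle\mathcal{M}\rangle - |\mathcal{N}|$ still has the sign pattern and positive principal minors of an $M$-matrix.

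A concrete choice I would use is $\mathcal{A} = \begin{pmatrix} 4 & 1 \\ 1 & 4\end{pmatrix}$ with $\mathcal{M} = \begin{pmatrix} 5 & -1 \\ -1 & 5\end{pmatrix}$ and $\mathcal{N} = \begin{pmatrix} 1 & -2 \\ -2 & 1\end{pmatrix}$, so that $\mathcal{A} = \mathcal{M} - \mathcal{N}$, $\langle\mathcal{A}\rangle = \begin{pmatrix} 4 & -1 \\ -1 & 4\end{pmatrix}$, while $\langle\mathcal{M}\rangle - |\mathcal{N}| = \begin{pmatrix} 4 & -3 \\ -3 & 4\end{pmatrix}$; the latter has the correct sign pattern and determinant $7 > 0$, hence is an $M$-matrix, but clearly differs from $\langle\mathcal{A}\rangle$. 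There is no real obstacle in this proof: the forward direction is immediate, and the only mild care needed in the counterexample is to pick numerical entries so that $\langle\mathcal{M}\rangle - |\mathcal{N}|$ stays an $M$-matrix and does not accidentally become singular or lose its sign pattern.
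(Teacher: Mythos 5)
The paper does not actually prove this lemma; it is stated as a quoted result from \cite{Berman1994}, so there is no internal proof to compare against. Your argument is correct and self-contained. The forward direction is exactly the definition chase one expects: $H$-compatibility gives $\langle\mathcal{M}\rangle-|\mathcal{N}|=\langle\mathcal{A}\rangle$, and the $H$-matrix hypothesis makes the right-hand side an $M$-matrix, which is the definition of an $H$-splitting. Your counterexample also checks out: $\mathcal{M}-\mathcal{N}=\mathcal{A}$, the matrix $\langle\mathcal{M}\rangle-|\mathcal{N}|=\left(\begin{smallmatrix}4&-3\\-3&4\end{smallmatrix}\right)$ is a $Z$-matrix with inverse $\tfrac{1}{7}\left(\begin{smallmatrix}4&3\\3&4\end{smallmatrix}\right)\geq 0$, hence an $M$-matrix by Definition \ref{def1}, while it differs from $\langle\mathcal{A}\rangle=\left(\begin{smallmatrix}4&-1\\-1&4\end{smallmatrix}\right)$, so the splitting is an $H$-splitting that is not $H$-compatible. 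One small remark worth adding: the componentwise inequality $\langle\mathcal{A}\rangle\geq\langle\mathcal{M}\rangle-|\mathcal{N}|$ that you invoke, combined with the first part of Lemma \ref{lem1}, shows that any $H$-splitting forces $\mathcal{A}$ to be an $H$-matrix anyway; thus the only clause of the converse that can fail is compatibility itself, which is precisely what your example exhibits. The only cosmetic quibble is that your justification ``correct sign pattern and positive determinant'' for the $M$-matrix property is a characterization valid in the $2\times 2$ case but not literally the paper's definition; citing the explicit nonnegative inverse closes that gap.
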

\begin{lem}\label{lem4} \cite{Frommer1992}
	Suppose  $\mathcal{A} \geq 0 \in \mathcal{R}^{n \times n}$, if there exists $v ~\textgreater~ 0 \in \mathcal{R}^{n}$ and a scalar $\alpha_{1} ~\textgreater ~ 0$  such that $\mathcal{A}v \leq  \alpha_{1} v$ then $\rho(\mathcal{A}) \leq \alpha_{1} $. Moreover, if $ \mathcal{A}v~ \textless ~v$ then $\rho(\mathcal{A})\textless  1$.
\end{lem}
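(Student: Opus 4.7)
The plan is to reduce the claim to a standard norm estimate via a diagonal similarity transformation. Let $D = \operatorname{diag}(v_1,\dots,v_n)$, which is well-defined and invertible because $v > 0$. Set $B = D^{-1}\mathcal{A}D$. Since $D$ and $\mathcal{A}$ are nonnegative, $B \geq 0$. I will compute the $i$-th row sum of $B$: each entry is $B_{ij} = a_{ij}v_j/v_i$, so the $i$-th row sum equals $(\mathcal{A}v)_i / v_i$. The hypothesis $\mathcal{A}v \leq \alpha_1 v$ componentwise therefore gives $\sum_j B_{ij} \leq \alpha_1$ for every $i$, i.e. $\|B\|_\infty \leq \alpha_1$.

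Now I invoke the fact that similar matrices have the same spectrum, so $\rho(\mathcal{A}) = \rho(B)$, together with the general inequality $\rho(B) \leq \|B\|_\infty$. Chaining these gives $\rho(\mathcal{A}) \leq \alpha_1$, which is the first assertion.

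For the ``moreover'' part, I would deduce the strict inequality from the first assertion by producing an explicit $\alpha_1 < 1$ satisfying the hypothesis. If $\mathcal{A}v < v$ componentwise with $v > 0$, then each ratio $(\mathcal{A}v)_i/v_i$ is strictly less than $1$, and since there are only finitely many indices I may set
\[
\alpha_1 \;=\; \max_{1\leq i \leq n} \frac{(\mathcal{A}v)_i}{v_i} \;<\; 1.
\]
By construction $\mathcal{A}v \leq \alpha_1 v$, so the first part applies and yields $\rho(\mathcal{A}) \leq \alpha_1 < 1$.

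I do not anticipate a genuine obstacle: the argument is just the Collatz--Wielandt-type observation that a positive vector with $\mathcal{A}v \leq \alpha_1 v$ lets one diagonally rescale $\mathcal{A}$ into a matrix of $\infty$-norm at most $\alpha_1$. The only mild care required is to ensure that $v_i > 0$ is used in dividing by $v_i$ (hence the strict positivity hypothesis on $v$, not merely $v \geq 0$) and that the finiteness of the index set is used to upgrade a componentwise strict inequality to a uniform bound $\alpha_1 < 1$ in the second part.
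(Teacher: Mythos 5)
Your proof is correct: the diagonal rescaling $B=D^{-1}\mathcal{A}D$ with $D=\mathrm{diag}(v_1,\dots,v_n)$, the identification of the row sums of $B$ with $(\mathcal{A}v)_i/v_i$, and the chain $\rho(\mathcal{A})=\rho(B)\le\|B\|_\infty\le\alpha_1$ are all valid, as is the reduction of the strict case to the weak case by taking $\alpha_1=\max_i (\mathcal{A}v)_i/v_i<1$. There is nothing to compare it against here, because the paper states this lemma as a quoted result from the literature (Frommer--Szyld) and supplies no proof of its own; your argument is the standard Collatz--Wielandt-type one and fills that gap correctly. The only cosmetic remark is that in the ``moreover'' step your constructed $\alpha_1$ could equal $0$ (when $\mathcal{A}=0$), which violates the letter of the hypothesis $\alpha_1>0$ in the first part, but your norm argument never actually uses that positivity, so the conclusion $\rho(\mathcal{A})<1$ still follows.
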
 
\section{Main results}			
For a given vector $\zeta \in \mathcal{R}^n $, we indicate the vector $\zeta _{+}=$max$\{0,\zeta\}$ and matrix $\mathcal{A}=(\mathcal{M}+I+D_{\mathcal{A}})-(\mathcal{N}+I+D_{\mathcal{A}})$, where $D_{\mathcal{A}}$ is  diagonal matrix of $\mathcal{A}$. In the following result, we convert the LCP $(\sigma, \mathcal{A})$ into a fixed point formulation.
\begin{thm}\label{thm1} Let $\mathcal{A}\in \mathcal{R}^{n\times n} $ with the splitting  $\mathcal{A}= (\mathcal{M}+I+D_{\mathcal{A}})- (\mathcal{N}+I+D_{\mathcal{A}})$. Let $\lambda=\zeta_{+}$, then equivalent formulation of  the LCP$(\sigma,\mathcal{A})$ in form of fixed point equation is \begin{eqnarray}\label{eq2}
		\zeta_{+}=(\mathcal{M}+2I+D_{\mathcal{A}})^{-1}[(\mathcal{N}+I+D_{\mathcal{A}})\zeta_{+}+ |(\mathcal{A} - I)\zeta_{+}+ \sigma|- \sigma]  
	\end{eqnarray}
\end{thm}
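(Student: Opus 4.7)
The plan is to go back and forth between the LCP conditions and the fixed point equation using Lemma~\ref{lem0} as the bridge. I would start from the LCP direction. Setting $x=\lambda$ and $y=\mathcal{A}\lambda+\sigma$ in Lemma~\ref{lem0}, the three complementarity conditions $\lambda\ge 0$, $\mathcal{A}\lambda+\sigma\ge 0$, $\lambda^{T}(\mathcal{A}\lambda+\sigma)=0$ collapse into the single equation
\begin{equation*}
\lambda+\mathcal{A}\lambda+\sigma=\bigl|\lambda-(\mathcal{A}\lambda+\sigma)\bigr|=\bigl|(\mathcal{A}-I)\lambda+\sigma\bigr|,
\end{equation*}
using that $|-v|=|v|$ for the last step. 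This is the compact ``absolute value'' reformulation of the LCP that I would take as my working identity.

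Next, I would substitute the hypothesis $\lambda=\zeta_{+}$ to get
\begin{equation*}
\mathcal{A}\zeta_{+}+\zeta_{+}+\sigma=\bigl|(\mathcal{A}-I)\zeta_{+}+\sigma\bigr|,
\end{equation*}
and then insert the splitting $\mathcal{A}=(\mathcal{M}+I+D_{\mathcal{A}})-(\mathcal{N}+I+D_{\mathcal{A}})$ on the left. Moving the $(\mathcal{N}+I+D_{\mathcal{A}})\zeta_{+}$ term and $\sigma$ to the right and collecting the $\zeta_{+}$'s on the left produces
\begin{equation*}
(\mathcal{M}+2I+D_{\mathcal{A}})\zeta_{+}=(\mathcal{N}+I+D_{\mathcal{A}})\zeta_{+}+\bigl|(\mathcal{A}-I)\zeta_{+}+\sigma\bigr|-\sigma,
\end{equation*}
and multiplying through by $(\mathcal{M}+2I+D_{\mathcal{A}})^{-1}$ yields exactly \eqref{eq2}. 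For the reverse implication I would simply run the same algebraic chain backwards: starting from \eqref{eq2}, multiply by $(\mathcal{M}+2I+D_{\mathcal{A}})$, recombine via the splitting to recover $\mathcal{A}\zeta_{+}$, and then read off $\zeta_{+}+(\mathcal{A}\zeta_{+}+\sigma)=|\zeta_{+}-(\mathcal{A}\zeta_{+}+\sigma)|$; applying the ``if'' half of Lemma~\ref{lem0} with $x=\zeta_{+}$ and $y=\mathcal{A}\zeta_{+}+\sigma$ then recovers the LCP, whose solution is $\lambda=\zeta_{+}$.

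The calculation itself is routine; the only genuine subtlety is the tacit invertibility of $\mathcal{M}+2I+D_{\mathcal{A}}$, which is needed to pass from the rearranged identity to the displayed form \eqref{eq2}. I would flag this as an implicit hypothesis (or note that it is automatic in the $H_{+}$-matrix setting that dominates the rest of the paper, since $D_{\mathcal{A}}>0$ together with a nonnegative $\mathcal{M}$-type piece makes $\mathcal{M}+2I+D_{\mathcal{A}}$ strictly diagonally dominant and hence nonsingular). Aside from bookkeeping the signs inside the absolute value, I do not foresee any real obstacle: the theorem is essentially Lemma~\ref{lem0} dressed up with the particular algebraic splitting that motivates the iteration scheme built in the subsequent sections.
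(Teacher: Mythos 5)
Your proposal is correct and follows essentially the same route as the paper: apply Lemma~\ref{lem0} with $x=\lambda$, $y=\mathcal{A}\lambda+\sigma$ to get $(I+\mathcal{A})\zeta_{+}=|(\mathcal{A}-I)\zeta_{+}+\sigma|-\sigma$, insert the splitting, and invert $\mathcal{M}+2I+D_{\mathcal{A}}$. Your added remarks on the reverse implication and the tacit nonsingularity of $\mathcal{M}+2I+D_{\mathcal{A}}$ (which the paper only assumes later, in Method~\ref{mthd1}) are reasonable refinements but do not change the argument.
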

\begin{proof}
	We have  $\lambda=\zeta_{+} \geq 0 $ and $\mathcal{A}\lambda+\sigma \geq 0$, from Lemma $\ref{lem0}$,
	\begin{equation*}
		\begin{split}
			( \mathcal{A}\zeta_{+}+\sigma +\zeta_+)&=| \mathcal{A}\zeta_{+}+\sigma-\zeta_+|\\
			(I+\mathcal{A})\zeta_+&=| (\mathcal{A}-I)\zeta_{+}+\sigma| -\sigma\\
			(\mathcal{M}+2I+D_{\mathcal{A}})\zeta_+&=(\mathcal{N}+I+D_{\mathcal{A}})\zeta_{+}+| (\mathcal{A}-I)\zeta_{+}+\sigma| -\sigma,\\
		\end{split}
	\end{equation*}
	the above equation can be rewritten as,
	\begin{equation}\label{eq2'}
		\zeta_+=(\mathcal{M}+2I+D_{\mathcal{A}})^{-1}[(\mathcal{N}+I+D_{\mathcal{A}})\zeta_{+}+| (\mathcal{A}-I)\zeta_{+}+\sigma| -\sigma]
	\end{equation}
\end{proof}

	\noindent In the following, Based on Equation ($\ref{eq2}$), we propose an iteration method which is known as Method 3.1 to solve the LCP$(\sigma, \mathcal{A})$.
	\begin{mthd}\label{mthd1}
			Let $\mathcal{A}=(\mathcal{M} +I+D_{\mathcal{A}})-(\mathcal{N}+I+D_{\mathcal{A}})$ be a splitting of the matrix $\mathcal{A}\in \mathcal{R}^{n\times n}$ and the matrix $(\mathcal{M}+2I+D_{\mathcal{A}})$ be the nonsingular. Then we use
		the following equation for Method 3.1 is
		\begin{equation}\label{eq003}
				\zeta^{(\eta+1)}_{+}=(\mathcal{M}+2I+D_{\mathcal{A}})^{-1}[(\mathcal{N}+I+D_{\mathcal{A}})\zeta^{(\eta)}_{+}+ |(\mathcal{A} - I)\zeta^{(\eta)}_{+}+ \sigma| - \sigma]
		\end{equation}
		Let Residual be the Euclidean norm of the error vector, which is defined as follows:  $$ Res(\lambda^{(\eta)})=|min(\lambda^{(\eta)}, \mathcal{A}\lambda^{(\eta)}+\sigma) |_{2}.$$ Consider a nonnegative initial vector $\lambda^{(0)}\in \mathcal{R}^n$.   The iteration process continues until the iteration sequence $\{\lambda^{(\eta)}\}_{\eta=0}^{+\infty} \subset \mathcal{R}^n$ converges.   For  $\eta=0,1,2,\ldots$, the iterative
		process continues until the iterative sequence  $\lambda^{(\eta+1)}\in \mathcal{R}^{n}$ converges. The iteration process stops if $Res(\lambda^{(\eta)})$ $\textless $ $ \epsilon $. For computing $\lambda^{(\eta+1)}$ we use the following  steps.\\
		\textbf{Step 1}: Given an initial vector $\zeta^{(0)} \in \mathcal{R}^{n}$,  $\epsilon ~\textgreater ~ 0 $  and set $ \eta=0 $.\\
		\textbf{Step 2}: Using the following scheme, generate the sequence $\lambda^{(\eta)}$:
		\begin{equation*}
			\zeta^{(\eta+1)}_{+}=(\mathcal{M}+2I+D_{\mathcal{A}})^{-1}[(\mathcal{N}+I+D_{\mathcal{A}})\zeta^{(\eta)}_{+}+ |(\mathcal{A} - I)\zeta^{(\eta)}_{+}+ \sigma| - \sigma],
		\end{equation*}
		and set $\lambda^{(\eta+1)}=\zeta_{+}^{(\eta+1)}$, where $\zeta_{+}^{(\eta+1)}$ is the $(\eta+1)^{th}$ approximate solution of Equation ($\ref{eq2'}$) .\\
		\textbf{Step 3}: If $ Res(\lambda^{(\eta)})$ $ \textless $ $\epsilon$  then stop; otherwise, set $\eta=\eta+1$ and return to step 2. 
	\end{mthd}
 \noindent Moreover, Method \ref{mthd1} provides a general structure for solving LCP$(\sigma, \mathcal{A})$. We obtain a class of new projected type iteration relaxation methods using  matrix splitting. We express the system matrix $\mathcal{A}=(\mathcal{M} +I+D_{\mathcal{A}})-(\mathcal{N}+I+D_{\mathcal{A}})$.  Then
\begin{enumerate}
	\item when $\mathcal{M}=D_{\mathcal{A}}-L_{\mathcal{A}}$ and $\mathcal{N}=U_{\mathcal{A}}$, Equation (\ref{eq003}) gives the new projected type Gauss Seidel iteration (NPGS)   method
	\begin{eqnarray*}
		\begin{split}
			\zeta^{(\eta+1)}_{+}&=(D_{\mathcal{A}}-L_{\mathcal{A}}+2I+D_{\mathcal{A}})^{-1}[(U_{\mathcal{A}}+I+D_{\mathcal{A}})\zeta^{(\eta)}_{+}\\&+ | (\mathcal{A} - I)\zeta^{(\eta)}_{+}+ \sigma |   - \sigma]. 
		\end{split}
	\end{eqnarray*}
	\item when $\mathcal{M}=(\frac{1}{\alpha_1}D_{\mathcal{A}}-L_{\mathcal{A}})$ and $\mathcal{N}=(\frac{1}{\alpha_1}-1)D_{\mathcal{A}}+U_{\mathcal{A}}$, Equation (\ref{eq003}) gives the new projected type successive   overrelaxation   iteration (NPSOR)  method
	\begin{eqnarray*}
		\begin{split}
			\zeta^{(\eta+1)}_{+}&=(D_{\mathcal{A}}-\alpha_1 L_{\mathcal{A}}+\alpha_1 (2I+D_{\mathcal{A}}))^{-1}[((1-\alpha_1)D_{\mathcal{A}}+U_{\mathcal{A}}\\&+\alpha_1I+D_{\mathcal{A}})\zeta^{(\eta)}_{+}+ \alpha_1|(\mathcal{A} - I)\zeta^{(\eta)}_{+}+ \sigma| - \alpha_1 \sigma].  
		\end{split}
	\end{eqnarray*}
	\item when  $\mathcal{M}=(\frac{1}{\alpha_1})(D_{\mathcal{A}}-\beta_1 L_{\mathcal{A}})$ and $\mathcal{N}=(\frac{1}{\alpha_1})[(1-\alpha_1)D_{\mathcal{A}}+(\alpha_1 - \beta_1 )L_{\mathcal{A}}+\alpha_1 U_{\mathcal{A}}]$, Equation (\ref{eq003}) gives the new projected type accelerated  overrelaxation   iteration (NPAOR)   method
	\begin{eqnarray*}
		\begin{split}
			\zeta^{(\eta+1)}_{+}&=(D_{\mathcal{A}}-\beta_1 L_{\mathcal{A}}+\alpha_1 (2I+D_{\mathcal{A}}))^{-1}[((1-\alpha_1)D_{\mathcal{A}}+(\alpha_1-\beta_1)U_{\mathcal{A}}\\&+\alpha_1I+D_{\mathcal{A}})\zeta^{(\eta)}_{+}+ \alpha_1| (\mathcal{A} - I)\zeta^{(\eta)}_{+}+ \sigma| - \alpha_1 \sigma]. 
		\end{split} 
	\end{eqnarray*}
	
\end{enumerate}
When $(\alpha_1, \beta_1)$ takes the values $(\alpha_1, \alpha_1)$, $(1, 1)$ and $(1, 0)$, the NPAOR method transforms into the new projected type successive overrelaxation (NPSOR), new projected type Gauss-Seidel (NPGS) and new projected type Jacobi (NPJ) methods respectively.

\section {Convergence analysis}
\noindent	In the following, we present  the convergence condition when the system matrix  $\mathcal{A}$ of LCP$(\sigma, \mathcal{A})$ is a $P$-matrix.
\begin{thm}\label{thm2}
	Let $\mathcal{A} \in \mathcal{R}^{n\times n}$ be a $P$-matrix and  $\zeta_{+}^{*}$ be the solution  of  Equation $(\ref{eq2})$. Let $	\rho(|(\mathcal{M}+2I+D_{\mathcal{A}})^{-1}|(| \mathcal{N}+I+D_{\mathcal{A}}| +| \mathcal{A}-I| ))~\textless ~1 $. Then the sequence $\{\zeta_+^{(\eta)}\}^{+\infty}_{\eta=1}$  generated by Method  $\ref{mthd1}$ converges to the solution $\zeta_+^{*}$ for any initial vector $\zeta^{(0)}\in \mathcal{R}^{n}$.	
\end{thm}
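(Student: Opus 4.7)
The plan is to set up a standard error recurrence for the iterates and then invoke the spectral radius hypothesis. First, since $\mathcal{A}$ is a $P$-matrix, the LCP$(\sigma,\mathcal{A})$ has a unique solution, so the fixed point $\zeta_{+}^{*}$ of \eqref{eq2} is well defined and plays the role of a reference point throughout. I will subtract the fixed-point identity \eqref{eq2'} (evaluated at $\zeta_{+}^{*}$) from the iteration \eqref{eq003} (evaluated at $\zeta_{+}^{(\eta)}$) to obtain
\[
\zeta_{+}^{(\eta+1)}-\zeta_{+}^{*} = (\mathcal{M}+2I+D_{\mathcal{A}})^{-1}\Bigl[(\mathcal{N}+I+D_{\mathcal{A}})(\zeta_{+}^{(\eta)}-\zeta_{+}^{*}) + \bigl(|(\mathcal{A}-I)\zeta_{+}^{(\eta)}+\sigma| - |(\mathcal{A}-I)\zeta_{+}^{*}+\sigma|\bigr)\Bigr].
\]

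Next, I would take componentwise absolute values and push them through with the two inequalities recorded in Definition \ref{defn0}: namely $|AB|\le|A||B|$, $|A+B|\le|A|+|B|$, and the reverse triangle inequality $\bigl||x|-|y|\bigr|\le|x-y|$. Applied to the expression above, the matrix inverse contributes $|(\mathcal{M}+2I+D_{\mathcal{A}})^{-1}|$, the linear term contributes $|\mathcal{N}+I+D_{\mathcal{A}}|\,|\zeta_{+}^{(\eta)}-\zeta_{+}^{*}|$, and the difference of absolute values is bounded by $|(\mathcal{A}-I)(\zeta_{+}^{(\eta)}-\zeta_{+}^{*})|\le|\mathcal{A}-I|\,|\zeta_{+}^{(\eta)}-\zeta_{+}^{*}|$. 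Collecting these estimates yields
\[
|\zeta_{+}^{(\eta+1)}-\zeta_{+}^{*}| \le T\,|\zeta_{+}^{(\eta)}-\zeta_{+}^{*}|, \qquad T := |(\mathcal{M}+2I+D_{\mathcal{A}})^{-1}|\bigl(|\mathcal{N}+I+D_{\mathcal{A}}|+|\mathcal{A}-I|\bigr).
\]

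Finally, iterating this inequality gives $|\zeta_{+}^{(\eta)}-\zeta_{+}^{*}|\le T^{\eta}|\zeta_{+}^{(0)}-\zeta_{+}^{*}|$, and since the hypothesis is exactly $\rho(T)<1$, the nonnegative matrix power $T^{\eta}$ tends to $0$; hence $\zeta_{+}^{(\eta)}\to\zeta_{+}^{*}$ for every starting vector $\zeta^{(0)}$.

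The step I expect to need the most care is the treatment of the nonlinear absolute-value term: one must apply the reverse triangle inequality componentwise so that the bound ends up expressed through $|\mathcal{A}-I|$ rather than $\mathcal{A}-I$ itself, and then combine it with the linear term so that the common factor $|\zeta_{+}^{(\eta)}-\zeta_{+}^{*}|$ can be pulled out on the right. Everything else is a routine chaining of the inequalities in Definition \ref{defn0}, after which the convergence conclusion follows directly from $\rho(T)<1$.
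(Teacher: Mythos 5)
Your proposal follows essentially the same route as the paper: the same error recurrence obtained by subtracting the fixed-point identity from the iteration, the same componentwise absolute-value bounds via Definition \ref{defn0} leading to the same matrix $T$, and the conclusion from $\rho(T)<1$. If anything, your final step (iterating to $|\zeta_{+}^{(\eta)}-\zeta_{+}^{*}|\le T^{\eta}|\zeta_{+}^{(0)}-\zeta_{+}^{*}|$ and using $T^{\eta}\to 0$) is the cleaner way to close the argument than the paper's bare assertion that the error strictly decreases.
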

\begin{proof}
	Let $\zeta^*_{+}$ be the solution of Equation $(\ref{eq2})$, then error is 
	\begin{equation*}
		\begin{split}
			(\mathcal{M}+2I+D_{\mathcal{A}})(\zeta^{(\eta+1)}_{+}-\zeta^*_{+})&=(\mathcal{N}+I+D_{\mathcal{A}})(\zeta^{(\eta)}_{+}-\zeta^*_{+})+ |(\mathcal{A}-I)\zeta^{(\eta)}_{+}+\sigma|\\&-|(\mathcal{A}-I)\zeta^{*}_{+}+\sigma|\\
			|(\mathcal{M}+2I+D_{\mathcal{A}})(\zeta^{(\eta+1)}_{+}-\zeta^*_{+})|&=|(\mathcal{N}+I+D_{\mathcal{A}})(\zeta^{(\eta)}_{+}-\zeta^*_{+})+ |(\mathcal{A}-I)\zeta^{(\eta)}_{+}+\sigma|\\&-|(\mathcal{A}-I)\zeta^{*}_{+}+\sigma||\\
		\end{split}	
	\end{equation*} 
	\begin{equation*}
		\begin{split}
			&\leq |(\mathcal{N}+I+D_{\mathcal{A}})(\zeta^{(\eta)}_{+}-\zeta^*_{+})+ |(\mathcal{A}-I)(\zeta^{(\eta)}_{+}-\zeta^*_{+})||\\
			&\leq |(\mathcal{N}+I+D_{\mathcal{A}})||(\zeta^{(\eta)}_{+}-\zeta^*_{+})|+ |(\mathcal{A}-I)||(\zeta^{(\eta)}_{+}-\zeta^*_{+})|\\
			|(\zeta^{(\eta+1)}_{+}-\zeta^*_{+})|	&\leq |(\mathcal{M}+2I+D_{\mathcal{A}})^{-1}|[|(\mathcal{N}+I+D_{\mathcal{A}})|+ |(\mathcal{A}-I)|]|(\zeta^{(\eta)}_{+}-\zeta^*_{+})|\\ 
			|(\zeta^{(\eta+1)}_{+}-\zeta^*_{+})|	&\textless |(\zeta^{(\eta)}_{+}-\zeta^*_{+})|.
		\end{split}	
	\end{equation*}
	Hence $\zeta^{(\eta)}_{+}$ converges to the solution $\zeta^*_{+}$.
\end{proof}
\noindent  Now we discuss the convergence conditions  for  Method  \ref{mthd1}  when the system matrix $\mathcal{A}$ of LCP$(\sigma,\mathcal{A})$ is an $H _+$-matrix.
\begin{thm}{\label{thm6}}
	Let $\mathcal{A}\in \mathcal{R}^{n\times n}$ be an $H_{+ }$-matrix and   $\mathcal{A}=\mathcal{M}-\mathcal{N}=(\mathcal{M}+I+D_{\mathcal{A}})-(\mathcal{N}+I+D_{\mathcal{A}})$ be an $H$-compatible splitting of the matrix $\mathcal{A}$, such that $\langle \mathcal{A} \rangle=\langle \mathcal{M}\rangle -| \mathcal{N}| $= $\langle \mathcal{M} + I+D_{\mathcal{A}} \rangle -| \mathcal{N}+I+D_{\mathcal{A}}| $ and  either one of the following conditions hold:\\
	(1) $D_{\mathcal{A}} ~\geq ~I$ and $\langle \mathcal{A} \rangle+2I-D_{\mathcal{A}}-| B|$ is an $M$- matrix, where $B= L_{\mathcal{A}}+U_{\mathcal{A}}$;\\
	(2) $D_{\mathcal{A}}~ \textless ~I$.\\
	Then the sequence $\{\zeta_+^{(\eta)}\}^{+\infty}_{\eta=1}$  generated by Method  $\ref{mthd1}$ converges to the solution $\zeta_+^{*}$ for any initial vector $\zeta^{(0)}\in \mathcal{R}^{n}$.
\end{thm}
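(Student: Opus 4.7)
The plan is to piggyback on the error recurrence already derived inside the proof of Theorem \ref{thm2}, which gives, for the exact fixed point $\zeta_+^*$ of (\ref{eq2}),
\begin{equation*}
|\zeta_+^{(\eta+1)}-\zeta_+^{*}| \;\le\; T\,|\zeta_+^{(\eta)}-\zeta_+^{*}|,\qquad T:=|(\mathcal{M}+2I+D_{\mathcal{A}})^{-1}|\bigl(|\mathcal{N}+I+D_{\mathcal{A}}|+|\mathcal{A}-I|\bigr).
\end{equation*}
Since $T\ge 0$, standard iteration of this bound shows $\zeta_+^{(\eta)}\to\zeta_+^{*}$ as soon as $\rho(T)<1$, so the whole task reduces to verifying this spectral radius inequality under either assumption (1) or (2).

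Next I would replace $|(\mathcal{M}+2I+D_{\mathcal{A}})^{-1}|$ by a cleaner upper bound. By Lemma \ref{lem3}, $H$-compatibility of the splitting $\mathcal{A}=(\mathcal{M}+I+D_{\mathcal{A}})-(\mathcal{N}+I+D_{\mathcal{A}})$ makes it an $H$-splitting, so $\mathcal{M}+I+D_{\mathcal{A}}$ is an $H$-matrix; adding $I$ strengthens the diagonal of its comparison matrix without changing off-diagonals, so $\langle \mathcal{M}+2I+D_{\mathcal{A}}\rangle \ge \langle \mathcal{M}+I+D_{\mathcal{A}}\rangle$, and Lemma \ref{lem1} promotes $\mathcal{M}+2I+D_{\mathcal{A}}$ to an $H$-matrix as well. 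Applying Lemma \ref{lem1} again gives $|(\mathcal{M}+2I+D_{\mathcal{A}})^{-1}|\le\langle \mathcal{M}+2I+D_{\mathcal{A}}\rangle^{-1}$, whence $T\le \widetilde T$ where $\widetilde T := \langle \mathcal{M}+2I+D_{\mathcal{A}}\rangle^{-1}(|\mathcal{N}+I+D_{\mathcal{A}}|+|\mathcal{A}-I|)$, and the monotonicity clause of Lemma \ref{lem1} yields $\rho(T)\le \rho(\widetilde T)$. By Lemma \ref{lem4} it therefore suffices to exhibit a positive vector $v$ satisfying $Wv>0$, where
\begin{equation*}
W \;:=\; \langle \mathcal{M}+2I+D_{\mathcal{A}}\rangle-|\mathcal{N}+I+D_{\mathcal{A}}|-|\mathcal{A}-I|.
\end{equation*}

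The crux of the argument, and the step I expect to be the main obstacle, is the entrywise simplification of $W$ using $H$-compatibility. The off-diagonal computation is straightforward: $-|m_{ij}|-|n_{ij}|=-|a_{ij}|$ by $H$-compatibility, so the off-diagonals of $W$ equal $-2|a_{ij}|$. The diagonal is subtler. Using the diagonal part of the $H$-compatibility identity $|m_{ii}+1+a_{ii}|-|n_{ii}+1+a_{ii}|=a_{ii}$ together with the fact (true for every reasonable splitting, and forced by $a_{ii}>0$) that $m_{ii}+1+a_{ii}>0$, one gets $|m_{ii}+2+a_{ii}|-|n_{ii}+1+a_{ii}|=a_{ii}+1$, so the $(i,i)$ diagonal of $W$ reduces to $a_{ii}+1-|a_{ii}-1|$. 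Splitting on the sign of $a_{ii}-1$ then yields the two cases of the theorem: under (1) the diagonal is $2$, giving $W=\langle\mathcal{A}\rangle+2I-D_{\mathcal{A}}-|B|$, which is an $M$-matrix by hypothesis; under (2) the diagonal is $2a_{ii}$, giving $W=2\langle\mathcal{A}\rangle$, an $M$-matrix because $\mathcal{A}$ is $H_+$. In either case there exists $v>0$ with $Wv>0$, hence $\widetilde T v<v$, hence $\rho(\widetilde T)<1$ by Lemma \ref{lem4}, and the convergence claim follows.
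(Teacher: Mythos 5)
Your proposal is correct and takes essentially the same route as the paper's proof: both reduce the claim to $\rho(T)<1$ via Theorem \ref{thm2}, replace $|(\mathcal{M}+2I+D_{\mathcal{A}})^{-1}|$ by $(\langle \mathcal{M}\rangle+2I+D_{\mathcal{A}})^{-1}$ using Lemma \ref{lem1}, and then exhibit $v>0$ with $Tv<v$ by showing the residual matrix reduces, under $H$-compatibility, to $\langle\mathcal{A}\rangle+2I-D_{\mathcal{A}}-|B|$ in case (1) and to $2\langle\mathcal{A}\rangle$ in case (2), concluding with Lemma \ref{lem4}. Your entrywise evaluation of $W$ is merely a cleaner rendering of the paper's add-and-subtract matrix manipulation, so no substantive difference in approach.
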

\begin{proof} Let $\mathcal{A}=\mathcal{M}-\mathcal{N}=(\mathcal{M}+I+D_{\mathcal{A}})-(\mathcal{N}+I+D_{\mathcal{A}})$ and it holds that 
	$\langle \mathcal{A} \rangle\leq  \langle \mathcal{M} + I+D_{\mathcal{A}} \rangle \leq diag(\mathcal{M}+I+D_{\mathcal{A}})$, 
	hence $(\mathcal{M}+I+D_{\mathcal{A}})$ is an $H_{+}$-matrix and it holds that 
	$$ | (\mathcal{M}+2I+D_{\mathcal{A}})^{-1}|  \leq (\langle \mathcal{M} \rangle+2I+D_{\mathcal{A}})^{-1}.$$
	Let $T=| (\mathcal{M}+2I+D_{\mathcal{A}})^{-1}| (| (\mathcal{N}+I+D_{\mathcal{A}})| + | (\mathcal{A}-I)|)$.\\
	\noindent Then
	\begin{equation*}
		\begin{split}
			T & =| (2I+\mathcal{M}+D_{\mathcal{A}})^{-1}| [| \mathcal{N}+I+D_{\mathcal{A}}| +| (\mathcal{A}-I)| ]\\
			&\leq (2I+\langle \mathcal{M}\rangle+D_{\mathcal{A}})^{-1}[| \mathcal{N}+I+D_{\mathcal{A}}| +|(\mathcal{A}-I)|]\\
			&\leq (2I+\langle \mathcal{M}\rangle+D_{\mathcal{A}})^{-1}[| \mathcal{N}+I+D_{\mathcal{A}}|+|(D_{\mathcal{A}}-I)-(L_{\mathcal{A}}+U_{\mathcal{A}})|]\\
			&\leq (\langle \mathcal{M}\rangle+2I+D_{\mathcal{A}})^{-1}[(\langle \mathcal{M}\rangle+2I+D_{\mathcal{A}})-(\langle \mathcal{M}\rangle+2I+D_{\mathcal{A}})\\&+| \mathcal{N}+I+D_{\mathcal{A}}| +| D_{\mathcal{A}}-I| +| L_{\mathcal{A}}+U_{\mathcal{A}}| ].\\
		\end{split}
	\end{equation*}
	Case 1. Suppose  $D_{\mathcal{A}} ~\geq ~I$ and $\langle \mathcal{A} \rangle+2I-D_{\mathcal{A}}-| B|$ is an $M$-matrix then
	\begin{equation*}
		\begin{split}
			T&\leq I-(\langle \mathcal{M}\rangle+2I+D_{\mathcal{A}})^{-1}[(\langle \mathcal{M}\rangle+2I+D_{\mathcal{A}})-| \mathcal{N}+I+D_{\mathcal{A}}| \\&-D_{\mathcal{A}}+I-| L_{\mathcal{A}}+U_{\mathcal{A}}| ]\\	
			&\leq I-(\langle \mathcal{M}\rangle+2I+D_{\mathcal{A}})^{-1}[I+(\langle \mathcal{M}\rangle+I+D_{\mathcal{A}})-| \mathcal{N}+I+D_{\mathcal{A}}| \\&-D_{\mathcal{A}}+I-| L_{\mathcal{A}}+U_{\mathcal{A}}| ]\\
			&\leq I-(\langle \mathcal{M}\rangle+2I+D_{\mathcal{A}})^{-1}[(\langle \mathcal{M}\rangle+I+D_{\mathcal{A}})-| \mathcal{N}+I+D_{\mathcal{A}}| +2I -D_{\mathcal{A}}\\&-| L_{\mathcal{A}}+U_{\mathcal{A}}| ]\\
			&\leq I-(\langle \mathcal{M}\rangle+2I+D_{\mathcal{A}})^{-1}(\langle \mathcal{A}\rangle+2I -D_{\mathcal{A}}-| B|).\\		
		\end{split}
	\end{equation*}
	Since $\langle \mathcal{A}\rangle+2I -D_{\mathcal{A}}-| B|$ is an $M$-matrix,  there exists a positive vector $v~ \textgreater ~0$ such that $$(\langle \mathcal{A}\rangle+2I -D_{\mathcal{A}}-| B|) v ~\textgreater ~0.$$
	Therefore, $$Tv\leq (I-2(\langle \mathcal{M}\rangle+2I+D_{\mathcal{A}})^{-1}(\langle \mathcal{A}\rangle+2I -D_{\mathcal{A}}-| B|)v~ \textless~  v$$
	$$Tv~\textless~ v.$$
	Case 2. Suppose   $D_{\mathcal{A}} \textless I$ 	then
	\begin{equation*}
		\begin{split}
			T &\leq I-(\langle \mathcal{M}\rangle+2I+D_{\mathcal{A}})^{-1}[(\langle \mathcal{M}\rangle+2I+D_{\mathcal{A}})-| \mathcal{N}+I+D_{\mathcal{A}}| \\&+D_{\mathcal{A}}-I-| L_{\mathcal{A}}+U_{\mathcal{A}}|]\\	
			&\leq I-(\langle \mathcal{M}\rangle+2I+D_{\mathcal{A}})^{-1}[(\langle \mathcal{M}\rangle+I+D_{\mathcal{A}})-| \mathcal{N}+I+D_{\mathcal{A}}| \\&+D_{\mathcal{A}}-| L_{\mathcal{A}}+U_{\mathcal{A}}|]\\
			&\leq I-(\langle \mathcal{M}\rangle+2I+D_{\mathcal{A}})^{-1}[(\langle \mathcal{M}\rangle+I+D_{\mathcal{A}})-| \mathcal{N}+I+D_{\mathcal{A}}| +D_{\mathcal{A}}\\&- | L_{\mathcal{A}}+U_{\mathcal{A}}| ]\\
			&\leq I-2(\langle \mathcal{M}\rangle+2I+D_{\mathcal{A}})^{-1}\langle \mathcal{A}\rangle.\\		
		\end{split}
	\end{equation*}
	Since $\langle \mathcal{A}\rangle$ is an $M$-matrix, then there exists a positive vector $v~ \textgreater ~0$ such that $$(\langle \mathcal{A}\rangle) v ~\textgreater ~0.$$
	Therefore, $$Tv\leq (I-2(\langle \mathcal{M}\rangle+2I+D_{\mathcal{A}})^{-1}(\langle \mathcal{A}\rangle)v~ \textless~  v$$
	This implies that 
	$$Tv~\textless~ v.$$
	From Cases 1 and  2 and based on Lemma $\ref{lem4}$, we obtain that $\rho(T)\textless  1$. Therefore based on Theorem  $\ref{thm2}$, the iteration sequence $\{\zeta_+^{(\eta)}\}^{+\infty}_{\eta=1}$ generated by Method  $\ref{mthd1}$ converges to $\zeta^{*}_+$ for  any initial vector $\zeta^{(0)}.$
\end{proof}
\section{Numerical examples}
In this section, two numerical examples are given in this part to demonstrate the effectiveness of our proposed method and  use  some notation as  number of iteration steps (denoted by IT), CPU time in seconds (denoted by CPU). Let   $\zeta^{(0)}=(1,0,\ldots 1,0,\ldots)^T\in \mathcal{R}^{n}$ be an initial vector and set $\epsilon=10^{-5}$. We consider the  LCP$(\sigma,\mathcal{A})$ which has always unique solution and   define $\sigma=-\mathcal{A} {\lambda}^{*}$, where 
${\lambda}^{*}=(1,2,1,\cdots,1,2)^{T} \in \mathcal{R}^n.$
\noindent The proposed new projected  Gauss Seidel iteration  method (NPGS) and the new projected  successive over relaxation iteration method (NPSOR) are compared with the modulus based Gauss Seidel (MGS) Method    and the modulus based successive over relaxation (MSOR) method   \cite{Bai2010} respectively, which are effective in solving LCP$(\sigma,\mathcal{A})$ and  set $ \Omega = \frac{1}{2\alpha}D_{\mathcal{A}}$ for MGS and MSOR methods.   
\noindent Matlab version 2021a on an Acer Desktop (Intel(R) Core(TM) i7-8700 CPU @ 3.2 GHz, 3.19 GHz, 16.00GB RAM) is used for all calculations.
Table \ref{t1} and Table \ref{t2} list the numerical results for new projected type iteration matrix splitting Method $\ref{mthd1}$ (NPGS, NPSOR) and modulus based matrix splitting Methods (MGS, MSOR).
\begin{ex}\label{ex1}
	The system matrix $\mathcal{A}$  are generated by $\mathcal{A}= P_{1}+\delta_{1}I$, 
	where $\delta_{1}$ are nonnegative real parameter and \\ \ \\
	$P_{1}=
	\begin{bmatrix}
		L_{1} & -I_{1} &0 & \ldots &0  \\
		-I_{1} & L_{1} & -I_{1} &\ldots &0  \\  
		0& -I_{1} & L_{1} &-I_{1}   &0\\ 
		0& \ldots & I_{1} & \ddots & -I_{1}\\ 
		0& \ldots & 0 &-I_{1} &L_{1}\\ 
	\end{bmatrix} $$\in \mathcal{R}^{n\times n}$,
	$L_{1}=
	\begin{bmatrix}
		4 &-1  & \ldots&\ldots &0   \\
		-1&  4 & -1 & \ldots& 0  \\  
		0&  -1& 4 &-1 &  0\\  
		0& \ldots &-1  & \ddots & -1\\ 
		0& \ldots & \ldots &-1 &4\\ 
	\end{bmatrix} $\\ \ \\ $\in \mathcal{R}^{m\times m}$, 
	 where 
	$I_{1}$ is the identity matrix of order $m$.
	\begin{table}[h!]
		\caption{Results for  MGS and MSOR methods    and   NPGS and NPSOR methods,  when  $\delta_{1}=4$.}
		\label{t1}
		\centering
		\begin{tabular}{@{}|l|l| l l l l l l|@{}}
			\hline
			&\textbf{n} &${100}$   & ${900}$   & ${2500}$ & ${3600}$& ${6400}$ & ${10000}$ \\ [.9ex] 
			\hline\hline
			$\textbf{MGS}$&$\textbf{IT} $&36 & 40 & 41 & 41& 42 & 42\\
			$\alpha=1$	& $\textbf{CPU}$ & 0.0030 & 0.0254  & 0.2550  &0.6083&1.8468&2.7943 \\
			&$\textbf{Res} $&9.7e-06 &8.0e-06 &7.9e-06 &8.9e-06&7.4e-06&8.4e-06\\
			$\textbf{NPGS}$&$\textbf{IT}$&21  &  23&254  &24&25&25\\
			$\alpha_1=1$	&$\textbf{CPU} $ & 0.0021 & 0.0035 &0.0175 &0.0636&0.1725&0.3785\\
			&$\textbf{Res} $&5.2e-06 &7.1e-06 &6.5e-06&8.0e-06& 5.5e-06&7.0e-06\\
			\hline 
			$\textbf{MSOR}$&$\textbf{IT} $&15 & 17 & 18 & 18& 18 &19\\
			$\alpha=0.85$	& $\textbf{CPU}$ & 0.0024 & 0.0044  & 0.0134 &0.0462&0.1118&0.2246 \\
			&$\textbf{Res} $&9.5e-06 &7.6e-06 &5.2e-06 &6.5e-06&8.9e-06&4.3e-06\\
			$\textbf{NPSOR}$&$\textbf{IT}$&15  &  16&17  & 17 & 17 & 17\\
			$\alpha_1=1.7$	&$\textbf{CPU} $ & 0.0019 & 0.0031 &0.0138 &0.0449&0.1108&0.2217\\
			&$\textbf{Res} $&5.9e-06 &6.8e-06 &4.2e-06&4.9e-06& 6.3e-06&7.7e-06\\
			\hline 
		\end{tabular}
	\end{table}
\end{ex} 
\begin{ex} The system matrix $\mathcal{A}\in \mathcal{R}^{n\times n}$  is generated by $\mathcal{A}= P_{1}+\delta_{1}I$, 
	where $\delta_{1}$  are nonnegative real parameter and \\ \ \\
	$P_{1}=
	\begin{bmatrix}
		L_{1} & -0.5I_{1} &0 & \ldots &0  \\
		-1.5I_{1} & L_{1} & -0.5I_{1} &\ldots &0  \\  
		0& -1.5I_{1} & L_{1} &-0.5I_{1}   &0\\ 
		0& \ldots & -1.5I_{1} & \ddots & -0.5I_{1}\\ 
		0& \ldots & 0 &-1.5I_{1} &L_{1} 
	\end{bmatrix} $$ $,
	$L_{1}=
	\begin{bmatrix}
		4 &-1  & \ldots&\ldots &0   \\
		-1&  4 & -1 & \ldots& 0  \\  
		0&  -1& 4 &-1 &  0\\  
		0& \ldots &-1  & \ddots & -1\\ 
		0& \ldots & \ldots &-1 &4\\ 
	\end{bmatrix} $ \\ \ \\ $\in \mathcal{R}^{m\times m}$,
	\noindent where $I_{1}$ is the identity matrix of order $m$. 
	\begin{table}[h!]
		\caption{Results for  MGS and MSOR methods    and   NPGS and NPSOR methods,  when  $\delta_{1}=4$.}
		\label{t2}
		\centering
		\begin{tabular}{@{}|l|l| l l l l l l|@{}}
			\hline
			&\textbf{n} &${100}$   & ${400}$   & ${900}$ & ${1600}$& ${2500}$ & ${3600}$ \\ [.9ex] 
			\hline\hline
			$\textbf{MGS}$&$\textbf{IT} $&24 & 26 & 26& 26& 27 &27\\
			$\alpha=1$	& $\textbf{CPU}$ & 0.0038 & 0.0043  & 0.0203 &0.0691&0.1978& 0.3915\\
			&$\textbf{Res} $&9.0e-06 &6.5e-06 &8.7e-06 &9.6e-06&6.6e-06&7.4e-06\\
			$\textbf{NPGS}$&$\textbf{IT}$ &18  &  21  &  22  & 22  &  22  &  23\\
			$\alpha_1=1$	&$\textbf{CPU} $ & 0.0019 & 0.0033 &0.0138 &0.0469&0.1260&0.2637\\
			&$\textbf{Res} $&9.8e-06 &5.5e-06 &4.8e-06&5.9e-06& 8.0e-06&4.9e-06\\
			\hline 
			$\textbf{MSOR}$&$\textbf{IT} $&14 & 14 & 15 & 15& 15 &15\\
			$\alpha=0.88$	& $\textbf{CPU}$ & 0.0025 & 0.0037  & 0.0121 &0.0385&0.0932&0.1823 \\
			&$\textbf{Res} $&3.8e-06 &8.2e-06 &4.0e-06 &4.6e-06&5.50e-06&6.3e-06\\
			$\textbf{NPSOR}$&$\textbf{IT}$&12  &  13 & 14 & 14 & 14 & 14\\
			$\alpha_1=1.7$	&$\textbf{CPU} $ & 0.0017 & 0.0030 &0.0112 &0.0356&0.0921&0.1865\\
			&$\textbf{Res} $&5.5e-06 &6.6e-06 &3.6e-06&4.4e-06& 5.9e-06&7.5e-06\\
			\hline 
		\end{tabular}
	\end{table}
\end{ex}
\noindent	From Table \ref{t1} and Table \ref{t2}, we can observe that the number of iteration steps required for our proposed  NPGS and NPSOR methods is less than the MGS and MSOR methods.
\section{Conclusion} 
In this article, we introduce a class of new projected-type iteration methods based on matrix splitting for solving the linear complementarity problem LCP $(\sigma, \mathcal{A})$. During the iteration process, the large and sparse structure of $\mathcal{A}$ is maintained by these iterative forms. Moreover, the sufficient  conditions for convergence for  $H_+$ matrix or  $P$-matrix are presented. Finally, two numerical examples are provided to demonstrate the effectiveness of the proposed methods.\\ 
\noindent \textbf{Conflict of interest} The authors declare that there is no conflicts of interest.\\ 
\noindent \textbf{Acknowledgment.}  
The first author is thankful to the University Grants Commission (UGC), Government of India, under the JRF fellowship programme no. 1068/(CSIR-UGC NET DEC. 2017).
	\bibliographystyle{plain}
	\bibliography{bharat6}
\end{document}